\newtheorem{definition}[]{Definition}
\newtheorem{theorem}[]{Theorem}
\newtheorem{lem}[]{Lemma}
\newtheorem{rem}{Remark}[]
\def\bsk{\boldsymbol k}
\def\hat{\widehat}
\begin{document}
\begin{frontmatter}

\title{Bounding the expectation of the supremum of empirical processes indexed by Hölder classes}
\runtitle{Bounding the expectation of the supremum of Hölder empirical processes}


\begin{aug}
\author{\fnms{Nicolas} 
\snm{Schreuder}\ead[label=e1]{nicolas.schreuder@ensae.fr}}

\runauthor{N. Schreuder}
\affiliation{CREST, ENSAE, Institut Polytechnique de Paris}
\address{5 av. Le Chatelier, 91120 Palaiseau, France \printead{e1}.}

\begin{abstract}
In this note, we provide upper bounds on the expectation of the supremum of empirical processes indexed by Hölder classes of any smoothness and for any distribution supported on a bounded set in $\mathbb R^d$. 
These results can alternatively be seen as non-asymptotic risk bounds, when the unknown distribution is estimated by its empirical counterpart, based on $n$ independent observations,  
and the error of estimation is quantified by integral probability metrics (IPM). In particular, IPM indexed by Hölder classes are
considered and the corresponding rates are derived. These results 
interpolate between two well-known extreme cases: the rate $n^{-1/d}$ corresponding to the Wassertein-1 distance (the least smooth case) 
and the fast rate $n^{-1/2}$ corresponding to very smooth functions (for instance, functions from a RKHS defined by a bounded kernel). 
\end{abstract}
\end{aug}

\end{frontmatter}

\maketitle

\section{Introduction}\label{sec:intro}

In many problems of mathematical statistics and learning 
theory, a crucial step is to understand how well the empirical distribution of a sample approximates the underlying true 
distribution. The theory of empirical processes
is devoted to this question. There are many papers and books
treating this and related problems, both from  asymptotic 
and nonasymptotic points of view; see, for instance, \citet{VaartWellner,delBarrio}. Among many remarkable achievements
of the theory of empirical processes, there are two results that
have been particularly often evoked and used in the recent literature 
in statistics and machine learning. 

To quickly present these two results, let us give some details 
on the framework. It is assumed that $n$ independent copies
$X_1,\ldots,X_n$ of a random variable $X$ taking its values in the $d$-dimensional hypercube
$[0,1]^d$ are observed. The aforementioned two results characterize
the order of magnitude of supremum of the empirical process
$\mathbb X_n(f) = \frac1n\sum_{i=1} f(X_i)-\mathbb E[f(X)]$ over some class of functions $\mathcal{F}$. 
More precisely, the first result established by \cite{Dudley} 
states that $\sup_{f\in \sf Lip(1)} \mathbb X_n(f)$ is of order $O(n^{-1/d})$,where ${\sf Lip(1)}$ is the set of all the
Lipschitz-continuous functions with Lipschitz constant 1. 
The second result \citep[Lemma 1]{briol2019statistical}, tells us that if $\mathcal F$ 
contains functions that are smooth enough, for instance functions
that are in a finite ball of a RKHS defined by a bounded kernel,
then $\sup_{f\in \mathcal F} \mathbb X_n(f)$ is of order 
$O(n^{-1/2})$, 
\textit{i.e.}, the same order as in the case when $\mathcal F$ contains
only one function. 

The main result of this note provides an interpolation between the
two aforementioned results. Roughly speaking, it shows that if 
$\mathcal F$ is the class of functions defined on $[0,1]^d$ that are
H\"older-continuous for a given constant $L$ and a given order 
$\alpha > 0$, then the supremum of the empirical process over
$\mathcal F$ is of order $O(n^{-(\frac{\alpha}{d} \wedge \frac{1}{2})})$ with an additional 
slowly varying factor $\log n$ when $\alpha = d/2$. Clearly, when $\alpha = 1$ this coincides with the result from \citet{Dudley}, 
while for $\alpha \geq d/2 $ we get the fast and dimension-free rate $n^{-1/2}$, up to a log factor. 

The rest of this note is organized as follows. We complete this introduction by providing all the important notations used throughout this note. \Cref{sec:holder} is devoted to presenting and formally defining Hölder classes and Integral Probability Metrics (IPM). In \Cref{sec:empirical_proc}, we expose some important concepts and results from empirical process theory needed for our proofs. We end this note by stating our main theorem in \Cref{sec:mainthm}. Some extensions are 
mentioned in \Cref{sec:ext}. The proofs are postponed to the appendix.

\subsection*{Notations} A multi-index $\bsk$ is a vector with integer coordinates $(k_1, \dots, k_d)$. We write $\lvert \bsk \rvert = \sum_{i=1}^d k_i$. For a given multi-index $\bsk = (k_1, \dots, k_d)$, we define the  differential operator
\begin{align}
    D^{\bsk} = \frac{\partial^{\rvert \bsk \lvert}}{\partial x_1^{k_1} \dots \partial x_d^{k_d}}.
\end{align}
For any positive real number $x$, $\lfloor x \rfloor$ denotes the largest integer strictly smaller than $x$.
We let $\mathcal{X}$ be a convex bounded set in $\mathbb{R}^d$ with non-empty interior. We assume that all the functions and function classes considered in this note are supported on the bounded set $\mathcal{X}$. For any integer $k$, we denote by $C^{k}(\mathcal{X}, \mathbb{R})$ the class of real-valued functions with domain $\mathcal{X}$ which are $k$-times differentiable with continuous $k$-th differentials.
For any real-valued bounded function $f$ on $\mathcal{X}$, we let $\lVert f \rVert_{\infty} \coloneqq \sup_{x \in \mathcal{X}} \lvert f(x) \rvert \in [0, +\infty)$. Note that we can consider the essential supremum instead of the supremum over $\mathcal{X}$ in which case our results would hold almost surely.
We let $\lVert \cdot \rVert$ denote some norm on $\mathbb{R}^d$. We denote by $\sigma_1, \dots, \sigma_n$ i.i.d. Rademacher random variables, \textit{i.e.}, discrete random variables such that $\mathbb{P}(\sigma_1 = 1)= \mathbb{P}(\sigma_1=-1)=1/2$ which are independent of any other source of randomness. We use the convention $\nicefrac{1}{0}=+\infty$. 

\section{A primer on H\"older classes and integral probability metrics}
\label{sec:holder}

In this section we define H\"older classes of functions and integral probability metrics. We then discuss some
properties of these notions and highlight their role in statistics 
and statistical learning theory.

\subsection{Hölder classes} 
A central problem in nonparametric statistics is to estimate a function belonging to an infinite-dimensional space (\textit{e.g.}, density estimation, regression function estimation, hazard function estimation),  see \citet{tsybakov2008introduction} for an introduction to the topic of nonparametric estimation. To obtain nontrivial rates of convergence, some kind of regularity is assumed on the function of interest. It can be expressed as conditions on the function itself, on its derivatives, on the coefficients of the function in a given basis, etc.
Hölder classes are one of the most common classes considered in the nonparametric estimation literature, they form a natural extension of Lipschitz-continuous functions and can be formalised with the following simple conditions. 
For any real number $\alpha > 0$, we define the Hölder norm of smoothness $\alpha$ of a $\lfloor \alpha \rfloor$-times differentiable function $f$ as
\begin{align}
    \lVert f \rVert_{\mathcal{H}^\alpha} \coloneqq \max_{\lvert k\rvert \leq \lfloor \alpha \rfloor} \lVert D^k f\rVert_{\infty} + \max_{\lvert k\rvert = \lfloor \alpha \rfloor} \sup_{x \neq y} \frac{\lvert D^k f(x) - D^k f(y) \rvert}{\lVert x - y \rVert^{\alpha - \lfloor \alpha \rfloor}}.
\end{align}
The Hölder ball of smoothness $\alpha$ and radius $L>0$, denoted by $\mathcal{H}^\alpha(L)$, is then defined as the class of $\lfloor\alpha\rfloor$-times continuously differentiable functions with Hölder norm bounded by the radius $L$:
\begin{align}
\mathcal{\mathcal{H}}^\alpha(L) = \left\{ f\in C^{\lfloor\alpha\rfloor}(\mathcal{X} , \mathbb{R}) \mid \lVert f \rVert_{\mathcal{H}^\alpha} \leq L   \right\}.
\end{align}
To get a grasp of why Hölder classes are convenient, let us consider the case $d=1$. In this setting, one can easily derive an upper bound on the remainder of the best polynomial approximation of any given Hölder function. Indeed, for any positive $\alpha > 0$ with $\lfloor \alpha\rfloor=\ell$, for any  function $f\in \mathcal{H}^\alpha(L)$, Taylor's theorem yields that for any points $x, y \in \mathcal{X}$,
\begin{align}
    \bigg\lvert f(y) - \sum_{k=0}^{\ell} \frac{f^{(k)}(x)}{k!} (y-x)^k \bigg\rvert &\leq \frac{\lvert y-x \rvert^{\ell}}{(\ell - 1)!} \int_0^1 \lvert f^{(\ell)}(x+t(y-x)) - f^{(\ell)}(x) \rvert (1-t)^{\ell} dt\\
    &\leq L\frac{\lvert y-x \rvert^{\alpha}}{(\ell-1)!} \int_0^1 t^{\alpha - \ell}(1-t)^{\ell} dt\\
    &\leq L\frac{\lvert y - x \rvert^\alpha}{\ell !}.
\end{align}
Note that this bound holds uniformly over the Hölder ball $\mathcal{H}^\alpha(L)$.

\subsection{Integral probability metrics} 

The class $\mathcal{H}^1$(1) of $1$-Lipschitz functions has received a lot of attention in the optimal transport literature; see \citep{santambrogio2015optimal} for an overview of the topic of mathematical optimal transport. This interest comes from the Kantorovitch duality, which implies that the Wasserstein-1 distance (also known as the earth mover's distance) can be expressed, for any probability measures $P, Q$, as a supremum of some functional over $1$-Lipschitz functions:
\begin{align}
W_1(P, Q) = \sup_{f \in \mathcal{H}^1(1)} \lvert \mathbb{E}_{X \sim P} f(X) - \mathbb{E}_{Y \sim Q} f(Y) \rvert.
\end{align}
More generally, for a given class $\mathcal{F}$ of bounded functions, one can define a pseudo-metric on the space of probability measures, the integral probability metric (IPM) induced by the class $\mathcal{F}$, as 
\begin{align}
  d_{\mathcal{F}} (P, Q) = \sup_{f \in \mathcal{F}} \lvert \mathbb{E}_{X \sim P} f(X) - \mathbb{E}_{Y \sim Q} f(Y) \rvert.
\end{align}
The literature on IPM has recently been boosted by the advent of adversarial generative models \citep{arjovsky2017wasserstein,goodfellow2014generative}. A reason for this is that an IPM can be seen as an adversarial loss: to compare two probability distributions, it seeks for the function which discriminates the most the two distributions in expectation. Initially studied by the deep learning community, impressive empirical results obtained by adversarial generative models on several tasks such as image generation led statisticians to study it theoretically \citep{liang2018well,chen2020statistical,briol2019statistical} (see also  \citet{sriperumbudur2012empirical} for statistical results on IPM in a general framework). 
Since, as pointed out earlier, Lipschitz functions are also Hölder, one can wonder what happens for IPM indexed by general Hölder classes. Such IPM already appeared in the literature: \citet{scetbon2020handling} showed that $\alpha$-Hölder IPM with smoothness $\alpha \leq 1$ correspond to the cost of a generalized optimal transport problem.

To further motivate our study, let us consider the abstract problem of minimum distance estimation: for a given probability measure $P$, find a distribution $Q$ in a given set of probability measures $\mathcal{Q}$ such that $Q$ is close to $P$ under the metric $d_\mathcal{F}$:
\begin{align}\label{eq:theoretical_estimation}
    \min_{Q \in \mathcal{Q}} d_\mathcal{F}(Q, P).
\end{align}
For example, when $\mathcal{F}$ is taken to be the class of $1$-Lipschitz function, this problem is known as minimum Kantorovitch estimation \citep{bassetti2006minimum}. In statistics, the probability  $P$ is usually unknown and one is only given i.i.d.\ samples $X_1, \dots, X_n$ from the probability distribution $P$. A natural strategy is then to employ the empirical distribution $P_n=1/n\sum_{i=1}^n \delta_{X_i}$ as a proxy for the theoretical distribution and instead of \eqref{eq:theoretical_estimation} solve the problem:
\begin{align}\label{eq:empirical_estimation}
    \min_{Q \in \mathcal{Q}} d_\mathcal{F}(Q, P_n).
\end{align}
Since the triangle inequality yields
\begin{align}
    \lvert d_{\mathcal{F}}(Q, P) - d_{\mathcal{F}}(Q, P_n) \rvert \leq d_{\mathcal{F}}(P, P_n) =  \sup_{f \in \mathcal{F}} \bigg\lvert \frac1{n}\sum_{i=1}^n f(X_i) - \mathbb{E}f(X) \bigg\rvert,
\end{align}
one question of interest is to measure how fast the empirical measure approximates the true measure under the IPM $d_\mathcal{F}$.
If the rates are fast, we do not loose much by considering the empirical problem \eqref{eq:empirical_estimation} instead of the theoretical one of \eqref{eq:theoretical_estimation}. However if the rates are slow, one cannot expect the distances of the solutions to the measure $P$ to be close.
We will see in the next section that the latter expression corresponds to the supremum of the empirical process indexed by the class $\mathcal{F}$, it will enable us to leverage the rich literature on empirical processes to obtain rates of convergence for $d_\mathcal{F}(P, P_n)$.

\section{Empirical processes, metric entropy and Dudley's bounds}\label{sec:empirical_proc}

This section provides a short account of the notions and tools from the theory of empirical
processes which are necessary for stating and
establishing the main result. 

\subsection{Empirical processes} Empirical process are ubiquitous in statistical learning theory, we refer the reader to \cite{koltchinskii2011oracle,gine2016mathematical} for a general presentation of results on empirical processes and their link with statistics and learning theory. For clarity, we begin by recalling the definition of an empirical process.
\begin{definition} 
Let $\mathcal{F}$ be a class of real-valued functions $f\colon \mathcal{X} \to \mathbb{R}$, where $(\mathcal{X}, \mathcal{A}, P)$ is a probability space. Let $X$ be a random point in $\mathcal{X}$ distributed according to the distribution $P$ and let $X_1, \dots, X_n$ be independent copies of $X$. The random process $\big(\mathbb X_n(f)\big)_{f \in \mathcal{F}}$ defined by
\begin{align}
    \mathbb X_n(f) \coloneqq \frac{1}{n}\sum_{i=1}^n f(X_i) - \mathbb{E}f(X)
\end{align}
is called an \textit{empirical process} indexed by $\mathcal{F}$.
\end{definition}
In our case, we are interested in controlling the (expectation of the) supremum of an empirical process, a common case in the literature. Most of the time, the first step to apply for achieving this goal is to ``symmetrize" the empirical process as allowed by the following lemma.
Let $\hat{R}_n(\mathcal{F})$ be the empirical Rademacher complexity of function class $\mathcal{F}$, defined as
\begin{align}
    \hat{R}_n(\mathcal{F}) =  \mathbb{E} \left[ \sup_{f \in \mathcal{F}} 
    \frac{1}{n}\sum_{i=1}^n \sigma_i f(X_i) \,\Big|\,X_1,\ldots,X_n\right].
\end{align}
\begin{lem}[Symmetrization]\label{lem:sym}
For any class $\mathcal{F}$ of $P$-integrable functions,
\begin{align}
\mathbb{E}\bigg[\sup_{f \in \mathcal{F}}
\lvert \mathbb X_n(f) \rvert\bigg] 
\leq 2\, \mathbb{E}\big[\hat{R}_n(\mathcal{F})
\big].
\end{align}
\end{lem}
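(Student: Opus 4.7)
The standard strategy is the classical \emph{ghost sample} argument, which I would carry out in four steps.

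First, on the same probability space I would introduce an independent copy $X_1', \ldots, X_n'$ of $(X_1, \ldots, X_n)$. Since each $X_i'$ has law $P$ and is independent of $(X_1, \ldots, X_n)$, one has $\mathbb{E} f(X) = \mathbb{E}[\tfrac{1}{n}\sum_{i=1}^n f(X_i') \mid X_1, \ldots, X_n]$, so that
\[\mathbb{X}_n(f) = \mathbb{E}\Big[\tfrac{1}{n}\textstyle\sum_{i=1}^n (f(X_i) - f(X_i')) \,\Big|\, X_1, \ldots, X_n\Big].\]

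Second, I would apply Jensen's inequality — once for the convex function $|\cdot|$ and once for the supremum, both of which dominate the conditional expectation pointwise — to obtain
\[\mathbb{E}\sup_{f \in \mathcal{F}} |\mathbb{X}_n(f)| \le \mathbb{E} \sup_{f \in \mathcal{F}} \Big|\tfrac{1}{n}\textstyle\sum_{i=1}^n (f(X_i) - f(X_i'))\Big|.\]

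Third, I would introduce Rademacher signs $(\sigma_i)_{i=1}^n$ independent of everything else. Because the pairs $(X_i, X_i')$ are exchangeable, the random vector $(f(X_i) - f(X_i'))_{i=1}^n$ has symmetric joint law, so multiplying each coordinate by $\sigma_i$ preserves its distribution and the previous right-hand side equals
\[\mathbb{E} \sup_{f \in \mathcal{F}} \Big|\tfrac{1}{n}\textstyle\sum_{i=1}^n \sigma_i (f(X_i) - f(X_i'))\Big|.\]

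Finally, the triangle inequality splits this into two pieces, each equal to $\mathbb{E}[\hat R_n(\mathcal{F})]$ by the identical distribution of $(X_i)$ and $(X_i')$, yielding the factor $2$. The one mildly delicate point is reconciling the outer $|\cdot|$ on the left with the signed supremum in the definition of $\hat R_n(\mathcal{F})$: this is automatic as soon as $\mathcal{F}$ is closed under $f\mapsto -f$ (as is the case for the Hölder balls targeted in the sequel, where $\sup_f |g(f)| = \sup_f g(f)$ for any linear $g$), so the absolute value may be dropped at the end. Beyond this bookkeeping the argument is essentially routine; the only conceptual move is the first step, in which duplicating the sample trades the deterministic mean $\mathbb{E}f(X)$ for an identically distributed empirical quantity against which sign symmetrization becomes possible.
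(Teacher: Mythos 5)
Your argument is the standard ghost-sample symmetrization and it is correct; the paper itself states \Cref{lem:sym} without proof, so there is no in-text argument to compare against, but yours is exactly the classical route (introduce an independent copy, apply Jensen for the conditional expectation inside the absolute value and the supremum, use exchangeability of the pairs $(X_i,X_i')$ to insert the signs, then split by the triangle inequality). One remark: the point you call ``mildly delicate'' is in fact the only substantive issue, and you are right to flag it. With $\hat R_n(\mathcal F)$ defined as in the paper \emph{without} an absolute value inside the supremum, the inequality as stated is false for a general class of $P$-integrable functions: for a singleton $\mathcal F=\{f\}$ with $f$ non-constant, the left-hand side is of order $n^{-1/2}$ while $\hat R_n(\mathcal F)=0$. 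So the closure of $\mathcal F$ under $f\mapsto -f$ (which does hold for the Hölder balls used later, since $\lVert -f\rVert_{\mathcal H^\alpha}=\lVert f\rVert_{\mathcal H^\alpha}$) is a genuinely needed hypothesis, not bookkeeping; alternatively one keeps the absolute value inside the Rademacher average. Your proof is complete and correct once that hypothesis is made explicit.
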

The advantage of Rademacher processes is that, regardless of the distribution of the random variable $X$ and the function class $\mathcal{F}$, for a fixed sample $X_1, \dots, X_n$, the random variable $\sum_{i=1}^n \sigma_i f(X_i)$ has a sub-Gaussian behavior, in the following sense.
\begin{definition}[Sub-Gaussian behavior]
A centered random variable $Y$ has a sub-Gaussian behavior if there exists a positive constant $\sigma$ such that
\begin{align}
    \mathbb{E}e^{\lambda Y} \leq e^{\lambda^2 \sigma^2/2}, \quad  \forall \lambda \in \mathbb{R}.
\end{align}
In that case, we define the sub-Gaussian norm\footnote{See \cite[Section 2.5]{vershynin2018high} for the link between definitions of sub-Gaussian random variables (bound on moment-generating function, tail inequalities...) and the Orlicz norm $\psi_2$.} of $Y$ as
\begin{align}
    \lVert Y \rVert_{\psi_2} = \inf\left\{ t>0 : \mathbb{E}e^{Y^2/t^2} \leq 2 \right\}.
\end{align}
\end{definition}
Having a sub-Gaussian behavior essentially means to be at least as concentrated as a Gaussian random variable around its mean. Our definition is equivalent to the tail inequalities
\begin{align}
    \mathbb{P}(\lvert Y \rvert > t) \leq 2e^{-t^2/(2\sigma^2)}, \quad \forall t > 0.
\end{align}
This type of behavior will be crucial to obtain the main result of this note. Indeed, as we will see, the behavior of the supremum of an empirical process (and more generally a stochastic process) which has sub-Gaussian increments exclusively depends on the topology of the space by which the process is indexed.

\subsection{Metric entropy}

Let $(T, d)$ be a totally bounded metric space, \textit{i.e.},  for every real number $\varepsilon > 0$, there exists a finite collection of open balls of radius $\varepsilon$ whose union contains $T$. We give a formal definition of such finite collections, see also
\Cref{fig:epsiloncover} for an illustration.

\begin{definition}
Given $\varepsilon > 0$, a subset $T_\varepsilon \subset T$ is called an \emph{$\varepsilon$-cover} of $T$ if for every $t \in T$, there exists $s \in T_\varepsilon $ such that $d(s, t) \leq \varepsilon$.
\end{definition}
\begin{figure}
    \centering
    \includegraphics[scale=0.2]{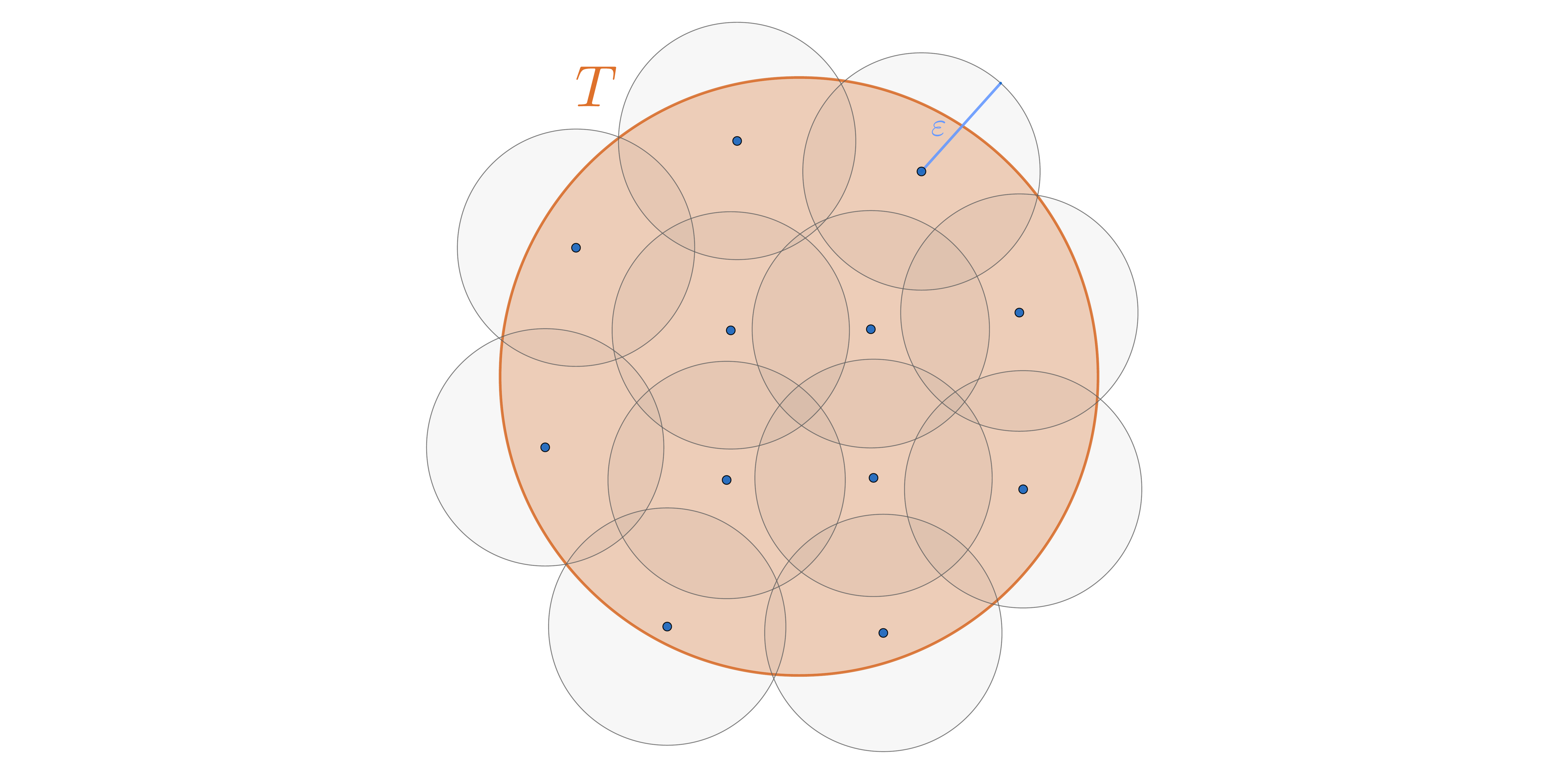}
    \caption{Illustration of an $\varepsilon$-cover for some space $T$.}
    \label{fig:epsiloncover}
\end{figure}
Note that adding any point to an $\varepsilon$-cover still yields an $\varepsilon$-cover. Thus we can look for $\varepsilon$-covers of a set with smallest cardinality, which we call covering number.

\begin{definition}
The \emph{$\varepsilon$-covering number} of $T$, denoted by $\mathcal{N}(T, d, \varepsilon)$, is the cardinality of the smallest $\varepsilon$-cover of $T$, that is
\begin{align}
    \mathcal{N}(T, d, \varepsilon) \coloneqq \min\big\{\lvert T_\varepsilon \rvert : T_\varepsilon \text{ is an } \varepsilon\text{-cover of } T \big\}.
\end{align}
The \emph{metric entropy} of $T$ is given by the logarithm of the $\varepsilon$-covering number.
\end{definition}
\begin{rem}
A totally bounded metric space $(T, d)$ is pre-compact in the sense that its closure is compact. The metric entropy (or entropic numbers) of $(T, d)$ can then be seen as some measure of compactness of the space. Indeed, 
$\mathcal N(T,d,\varepsilon)$ quantifies precisely how many balls of radius $\varepsilon$ are needed to cover the whole space $T$.
\end{rem}
Entropic numbers for Hölder classes are known and can be found in \textit{e.g.} \citet{kolmogorov,VaartWellner}.
\begin{theorem}[Theorem 2.7.3 in \cite{VaartWellner}]
\label{thm:entropic_numbers}
Let $\mathcal{X}$ be a bounded, convex subset of $\mathbb{R}^d$ with nonempty interior. There exists a constant $K_{\alpha,d}$ depending only on $\alpha$ and $d$ such that, for every $\varepsilon > 0$,
\begin{align}
    \log \mathcal{N} (\mathcal{H}^\alpha(1), \lVert \cdot \rVert_\infty, \varepsilon) \leq K_{\alpha,d} \lambda_d(\mathcal{X}^1)\, \varepsilon^{-d/\alpha},
\end{align}
where $\lambda_d$ is the $d$-dimensional Lebesgue measure and $\mathcal{X}^1$ is the $1$-blowup of $\mathcal{X}$: $\mathcal{X}^1 = \{y : \inf_{x \in \mathcal{X}}\lVert y -  x\rVert < 1 \}$.
\end{theorem}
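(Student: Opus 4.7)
The plan is to construct an explicit $\varepsilon$-net of $\mathcal{H}^\alpha(1)$ built from piecewise polynomial approximants, in the spirit of the classical Kolmogorov--Tikhomirov argument. Set $\ell = \lfloor \alpha \rfloor$, pick $\delta = c_1 \varepsilon^{1/\alpha}$ for a small constant $c_1 = c_1(\alpha,d)$, and partition the blowup $\mathcal{X}^1$ by a regular grid of closed cubes of side length $\delta$; the number of such cubes meeting $\mathcal{X}$ is at most $\lambda_d(\mathcal{X}^1)/\delta^d$. On each cube $C$, centered at $x_C$, approximate any $f \in \mathcal{H}^\alpha(1)$ by its degree-$\ell$ Taylor polynomial at $x_C$. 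The multivariate analogue of the Taylor estimate already derived in \Cref{sec:holder}, combined with $\|f\|_{\mathcal{H}^\alpha} \leq 1$, gives a uniform approximation error of order $\delta^\alpha$ on $C$, hence at most $\varepsilon/2$ for $c_1$ small enough.

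Next I would replace each local Taylor polynomial by one drawn from a finite, prescribed catalogue. Since $\|D^k f\|_\infty \leq 1$ for $|k| \leq \ell$, each coefficient $D^k f(x_C)/k!$ lies in a fixed bounded interval, and I would round it to the nearest point of a grid of step $\eta_k \asymp \varepsilon/\delta^{|k|}$; a term of multi-degree $k$ then incurs uniform error at most $\eta_k \delta^{|k|} \asymp \varepsilon$ across a cube of diameter $O(\delta)$, keeping the total approximation error below $\varepsilon$. The hard part, and the reason a naive per-cube count would give entropy of the wrong order, is that Taylor polynomials on adjacent cubes are \emph{not} independent: they come from a single $\alpha$-Hölder function, so for two cube centers at distance $O(\delta)$ the derivatives $D^k f$ can differ only by $O(\delta^{\alpha-|k|})$. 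I would therefore enumerate the cubes along a spanning path of the grid graph, choose the discretized polynomial in the root cube freely (finitely many options, depending only on $\alpha$ and $d$), and in every subsequent cube count only those discretized polynomials whose coefficients lie within $O(\delta^{\alpha-|k|})$ of those already selected in the adjacent cube. Hölder regularity then bounds the number of new possibilities per cube by a constant $M_{\alpha,d}$ independent of $\delta$.

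Putting everything together, the cardinality of the resulting $\varepsilon$-net is at most $N_0 \cdot M_{\alpha,d}^{\lambda_d(\mathcal{X}^1)/\delta^d}$ for constants $N_0, M_{\alpha,d}$ depending only on $\alpha$ and $d$. Taking logarithms and substituting $\delta \asymp \varepsilon^{1/\alpha}$ yields $\log \mathcal{N}(\mathcal{H}^\alpha(1), \|\cdot\|_\infty, \varepsilon) \leq K_{\alpha,d}\, \lambda_d(\mathcal{X}^1)\, \varepsilon^{-d/\alpha}$, as claimed. I expect the main obstacle to be the combinatorial bookkeeping of the inter-cube compatibility: choosing the quantization steps $\eta_k$ so that simultaneously (i) each cube's approximation error stays below $\varepsilon$, and (ii) the number of polynomials per new cube compatible with its already-chosen neighbor remains a pure constant in $\alpha$ and $d$, independent of the grid scale $\delta$.
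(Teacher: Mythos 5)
This statement is not proved in the paper at all: it is quoted as an external result from \citet{VaartWellner} (with roots in \citet{kolmogorov}), so there is no internal proof to compare against. Your sketch is, in substance, a correct reconstruction of the classical Kolmogorov--Tikhomirov argument behind that theorem: cubes of side $\delta\asymp\varepsilon^{1/\alpha}$, local Taylor polynomials of degree $\ell=\lfloor\alpha\rfloor$ with coefficients quantized at scale $\eta_k\asymp\varepsilon/\delta^{|k|}$, and---the essential point you correctly identify---counting only the \emph{increments} of the quantized coefficient vector between adjacent cubes, which the Hölder condition on the top-order derivatives confines to $O(1)$ choices per cube, giving $\log\mathcal{N}\lesssim (\#\text{cubes})\lesssim \lambda_d(\mathcal{X}^1)\delta^{-d}\asymp\lambda_d(\mathcal{X}^1)\varepsilon^{-d/\alpha}$. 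Two details need more care than your sketch gives them. First, the functions are defined only on $\mathcal{X}$, and the centers of grid cubes meeting $\mathcal{X}$ may fall outside $\mathcal{X}$; the Taylor polynomials must be anchored at points of $\mathcal{X}\cap C$, and the comparison of coefficients between two adjacent cubes requires the segment joining their anchor points to lie in $\mathcal{X}$ --- this is precisely where the convexity hypothesis is used, and it is also what guarantees that the occupied cubes can be enumerated so that each new cube has an already-visited neighbor at distance $O(\delta)$ reachable inside $\mathcal{X}$ (your ``spanning path of the grid graph'' needs this connectivity, which is not automatic for a general bounded set). Second, the count $N_0\cdot M_{\alpha,d}^{\#\text{cubes}}$ and the inclusion of the $\delta$-cubes in $\mathcal{X}^1$ are only valid for $\varepsilon$ below a threshold depending on $\alpha$ and $d$; the range of large $\varepsilon$ must be absorbed into $K_{\alpha,d}$ separately (which is routine, since $\lambda_d(\mathcal{X}^1)$ is bounded below by the volume of a unit ball and $\varepsilon\mapsto\mathcal{N}(\cdot,\varepsilon)$ is nonincreasing). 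With those two points patched, your argument is the standard proof of the cited entropy bound.
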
 

\subsection{Dudley's bound and its refined version} 

We now present classic results which show the link between the topology of the indexing set and the behavior of the supremum of the corresponding empirical process.  
Following \cite[Definition 8.1.1]{vershynin2018high}, for $K\geq 0$, we say that a random process $(X_t)_{t \in T}$ on a metric space $(T, d)$ has $K$-sub-Gaussian increments if 
\begin{align}
    \lVert X_t - X_s \rVert_{\psi_2} \leq K d(t, s),\quad \text{ for all }\quad t, s \in T.
\end{align}

\begin{theorem}[Dudley's inequality]
Let $(X_t)_{t \in T}$ be a mean-zero random process on a metric space $(T, d)$ with $K$-sub-Gaussian increments. Then
\begin{align}
    \mathbb{E}\Big[ \sup_{t \in T} X_t\Big] \leq CK \int_0^{+\infty} \sqrt{\log \mathcal{N}(T, d, \varepsilon)}\, d \varepsilon,
\end{align}
for some universal constant $C>0$.
\end{theorem}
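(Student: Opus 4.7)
The plan is a standard chaining argument. First I would reduce to the case where $T$ has finite diameter $\Delta$ with respect to $d$, since otherwise $\mathcal{N}(T,d,\varepsilon) \geq 2$ for all $\varepsilon$ below some threshold and the right-hand side is already infinite. Then I would build a hierarchy of nets: set $\varepsilon_k = \Delta \cdot 2^{-k}$ for integers $k \geq 0$, pick $T_0 = \{t_0\}$ for some fixed $t_0 \in T$, and for $k \geq 1$ let $T_k \subseteq T$ be a minimal $\varepsilon_k$-net of $T$, so that $|T_k| = \mathcal{N}(T,d,\varepsilon_k)$. Let $\pi_k \colon T \to T_k$ send each $t$ to one of its closest points in $T_k$.

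Next comes the chaining step. For each $t \in T$ and each integer $N \geq 1$, telescoping gives
\begin{align*}
X_{\pi_N(t)} - X_{t_0} = \sum_{k=1}^{N} \bigl( X_{\pi_k(t)} - X_{\pi_{k-1}(t)} \bigr),
\end{align*}
and each link satisfies $d(\pi_k(t), \pi_{k-1}(t)) \leq \varepsilon_k + \varepsilon_{k-1} \leq 3\varepsilon_k$, so by the sub-Gaussian increments assumption $X_{\pi_k(t)} - X_{\pi_{k-1}(t)}$ is $3K\varepsilon_k$-sub-Gaussian. Taking the supremum over $t$ inside the sum, at level $k$ the pair $(\pi_{k-1}(t), \pi_k(t))$ takes at most $|T_{k-1}| \cdot |T_k| \leq \mathcal{N}(T,d,\varepsilon_k)^2$ distinct values. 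Invoking the standard maximal inequality $\mathbb{E}[\max_{i \leq M} Y_i] \leq c\sigma \sqrt{\log M}$ for $\sigma$-sub-Gaussian variables then yields
\begin{align*}
\mathbb{E} \sup_{t \in T} \bigl(X_{\pi_N(t)} - X_{t_0}\bigr) \leq C K \sum_{k=1}^{N} \varepsilon_k \sqrt{\log \mathcal{N}(T,d,\varepsilon_k)}.
\end{align*}

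The third step converts the dyadic sum to the advertised integral. Since $\varepsilon \mapsto \mathcal{N}(T,d,\varepsilon)$ is non-increasing and $\varepsilon_{k-1} - \varepsilon_k = \varepsilon_k$, one has $\varepsilon_k \sqrt{\log \mathcal{N}(T,d,\varepsilon_k)} \leq \int_{\varepsilon_k}^{\varepsilon_{k-1}} \sqrt{\log \mathcal{N}(T,d,\varepsilon)}\,d\varepsilon$, whence the series is bounded by $\int_0^{\Delta} \sqrt{\log \mathcal{N}(T,d,\varepsilon)}\,d\varepsilon$, which coincides with the integral on $(0,+\infty)$ because $\mathcal{N}(T,d,\varepsilon) = 1$ for $\varepsilon \geq \Delta$. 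Since $\mathbb{E} X_{t_0} = 0$, the left-hand side also bounds $\mathbb{E}\sup_t X_{\pi_N(t)}$.

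The final step, and the only genuinely delicate one, is letting $N \to \infty$ to replace $X_{\pi_N(t)}$ by $X_t$. This requires a separability assumption on the process (or passage to a separable modification), after which the sub-Gaussian increment bound forces $X_{\pi_N(t)} \to X_t$ in a sufficiently uniform sense to pass to the limit via Fatou's lemma. The arithmetic with the $\varepsilon_k$ and the absolute constant from the sub-Gaussian maximal inequality is elementary bookkeeping; the real conceptual care is this last measurability/continuity step, which is why the theorem is standardly stated for separable processes.
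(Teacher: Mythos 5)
Your overall architecture (dyadic nets, telescoping chain, sub-Gaussian maximal inequality over the pairs, sum-to-integral conversion, separability at the end) is the standard and correct route. Note that the paper does not actually prove this classical theorem --- it cites it and only proves the \emph{refined} Dudley bound (\Cref{refineddudley}) in the appendix, via exactly the chaining-plus-Massart machinery you describe; so your proposal is the natural companion argument.

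There is, however, one step that fails as written: the sum-to-integral conversion. You claim $\varepsilon_k \sqrt{\log \mathcal{N}(T,d,\varepsilon_k)} \leq \int_{\varepsilon_k}^{\varepsilon_{k-1}} \sqrt{\log \mathcal{N}(T,d,\varepsilon)}\,d\varepsilon$, comparing $\varepsilon_k$ against the dyadic interval \emph{above} it. Since $\varepsilon \mapsto \mathcal{N}(T,d,\varepsilon)$ is non-increasing, on $[\varepsilon_k, \varepsilon_{k-1}]$ one has $\mathcal{N}(T,d,\varepsilon) \leq \mathcal{N}(T,d,\varepsilon_k)$, so the inequality actually goes the other way; it is false whenever the covering number drops sharply at $\varepsilon_k$ (e.g.\ $\mathcal{N} \equiv 1$ just above $\varepsilon_k$ but $\mathcal{N}(\varepsilon_k)$ large makes the right-hand side zero and the left-hand side positive). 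The standard fix is to compare against the interval \emph{below}: $\varepsilon_k \sqrt{\log \mathcal{N}(T,d,\varepsilon_k)} = 2(\varepsilon_k - \varepsilon_{k+1})\sqrt{\log \mathcal{N}(T,d,\varepsilon_k)} \leq 2\int_{\varepsilon_{k+1}}^{\varepsilon_k} \sqrt{\log \mathcal{N}(T,d,\varepsilon)}\,d\varepsilon$, since the integrand there dominates $\sqrt{\log \mathcal{N}(T,d,\varepsilon_k)}$; summing gives the integral at the cost of a factor $2$ absorbed into $C$. This is precisely the direction used in the paper's appendix proof of \Cref{refineddudley}, where $\gamma_j\sqrt{\log |T_j|}$ is rewritten as $2(\gamma_j - \gamma_{j+1})\sqrt{\log |T_j|}$ before being bounded by $\int_{\gamma_{j+1}}^{\gamma_j}$. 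A second, much smaller imprecision: in your reduction, infinite diameter forces $\mathcal{N}(T,d,\varepsilon) \geq 2$ for \emph{all} $\varepsilon > 0$ (not merely below a threshold), and it is the divergence of $\int^{+\infty}\sqrt{\log 2}\,d\varepsilon$ at the upper end that makes the right-hand side infinite. With these two repairs the proof is complete; your closing remark about separability correctly identifies a hypothesis that the theorem statement itself leaves implicit.
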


One drawback of Dudley's bound is that the 
integral on the right hand side may diverge 
if the metric entropy of $T$ tends to infinity 
at a very fast rate when $\varepsilon\to 0$. 
For example, when the metric entropy is upper bounded by $\varepsilon^{-\gamma}$, as it was 
seen to be the case with $\gamma=d/\alpha$ 
for $\alpha$-Hölder-smooth $d$-variate functions, the 
integral converges if and only if $\gamma < 2$.  

An improvement of Dudley's bound in the case where the process $X_t$ is a Rademacher average 
indexed by a class of functions $\mathcal{F}$---circumventing the 
problem of divergence of the integral---was proposed by \cite[Lemma A.3]{srebro2010smoothness} (see also \citet{srebro2010note}). Before stating the theorem, let us recall the definition of the $L_2(P_n)$ norm of a function $f$: 
\begin{align}
    \lVert f \rVert_{L_2(P_n)}^2 = 
    \int_{\mathcal X} f^2\,dP_n=
    \frac{1}{n} \sum_{i=1}^n f(X_i)^2.
\end{align}
\begin{theorem}\label{refineddudley}
Let  $\mathcal{F} \subset \{f\colon \mathcal{X} \to \mathbb{R}\}$ be any class of measurable functions containing the uniformly zero function and let  $S_n(\mathcal F) = \sup_{f\in\mathcal F} \lVert f \rVert_{L_2(P_n)}$. We have
  \begin{align}
    \hat{R}_n (\mathcal{F}) \leq \inf_{\tau > 0} \left\{ 4 \tau + \frac{12}{\sqrt{n}} \int_{\tau}^{S_n(\mathcal F)} \sqrt{\log \mathcal{N}(\mathcal{F}, L_2(P_n), \varepsilon)}\, d\varepsilon \right\}.
  \end{align}
\end{theorem}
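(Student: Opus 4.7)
The plan is a \emph{chaining with truncation} argument, applied conditionally on the sample $X_1,\ldots,X_n$ to the Rademacher process $\{f\mapsto n^{-1}\sum_i\sigma_i f(X_i):f\in\mathcal F\}$. Once the sample is frozen, the natural pseudo-metric on $\mathcal F$ is $d(f,g)=\lVert f-g\rVert_{L_2(P_n)}$, and Hoeffding's inequality shows that for every fixed $g\in\mathcal F$ the random variable $n^{-1}\sum_i\sigma_i g(X_i)$ is sub-Gaussian in $\sigma$ with parameter $\lVert g\rVert_{L_2(P_n)}/\sqrt n$. This sub-Gaussian property of the process relative to the $L_2(P_n)$ metric is the sole probabilistic ingredient; everything else is deterministic in the sample.

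For a fixed $\tau>0$, I would introduce the dyadic scales $\varepsilon_j=2^{-j}S_n(\mathcal F)$ and let $J$ be the smallest index with $\varepsilon_J\le\tau$. Since $0\in\mathcal F$ and $\lVert f\rVert_{L_2(P_n)}\le S_n(\mathcal F)$ for every $f\in\mathcal F$, the singleton $\mathcal F_0=\{0\}$ is an $\varepsilon_0$-cover; at each finer scale pick a minimal $\varepsilon_j$-cover $\mathcal F_j$ of $\mathcal F$ in the $L_2(P_n)$ pseudo-metric, and let $\pi_j(f)\in\mathcal F_j$ be a nearest neighbour of $f$. Decompose each $f$ via the telescoping identity $f=(f-\pi_J f)+\sum_{j=1}^J(\pi_j f-\pi_{j-1}f)$, which splits the sup over $f$ of the Rademacher average into a residual piece plus a chaining sum.

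Next, bound each piece separately. The residual is controlled pathwise by Cauchy--Schwarz,
\begin{align}
\biggl|\frac{1}{n}\sum_{i=1}^n\sigma_i(f-\pi_J f)(X_i)\biggr|\le \lVert f-\pi_J f\rVert_{L_2(P_n)}\le \varepsilon_J\le\tau,
\end{align}
uniformly in $f$ and in $\sigma$, which produces the term linear in $\tau$. For the chaining sum, at level $j\ge 1$ the increments $\pi_j f-\pi_{j-1}f$ take at most $|\mathcal F_j|\cdot|\mathcal F_{j-1}|\le|\mathcal F_j|^2$ distinct values, each of $L_2(P_n)$-norm at most $\varepsilon_j+\varepsilon_{j-1}=3\varepsilon_j$. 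The classical sub-Gaussian maximum inequality then gives an expected-sup bound of the form $(6\varepsilon_j/\sqrt n)\sqrt{\log\mathcal N(\mathcal F,L_2(P_n),\varepsilon_j)}$. Summing over $j$ and converting the dyadic sum into a Riemann integral, using $\varepsilon_j\le 2(\varepsilon_j-\varepsilon_{j+1})$ together with the monotonicity of $\varepsilon\mapsto\sqrt{\log\mathcal N(\mathcal F,L_2(P_n),\varepsilon)}$, produces the integral term with the claimed prefactor $12/\sqrt n$. Taking the infimum over $\tau>0$ finishes the argument.

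The main obstacle will be the careful bookkeeping of the constants and the handling of the discretization of $\tau$ on the dyadic grid: the $\sqrt 2$ from the sub-Gaussian maximum inequality combines with the $3$ from the triangle inequality and the $2$ from squaring $|\mathcal F_j|^2$ to yield the clean prefactor $6$, and an additional factor of $2$ from the dyadic-to-integral conversion gives $12$. The leading constant $4$ in front of $\tau$ is then what absorbs the slack between the dyadic stopping scale $\varepsilon_J\in(\tau/2,\tau]$ and the target value $\tau$, ensuring that the lower limit of the resulting integral can indeed be written as $\tau$ rather than the smaller $\varepsilon_{J+1}\ge\tau/4$ that would appear from a naive calculation.
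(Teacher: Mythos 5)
Your proposal is correct and follows essentially the same route as the paper's proof: dyadic chaining rooted at $0\in\mathcal F$, the triangle-inequality bound $3\varepsilon_j$ on the increments, a finite sub-Gaussian maximum (Massart) bound at each scale, and the identity $\varepsilon_j=2(\varepsilon_j-\varepsilon_{j+1})$ to convert the dyadic sum into the entropy integral with the prefactor $12/\sqrt n$. The only (harmless) difference is the stopping convention: the paper takes $N=\sup\{j:\gamma_j>2\tau\}$, which gives $\gamma_N\le4\tau$ and $\gamma_{N+1}\ge\tau$ directly, whereas your choice of $J$ yields the bound $\tau+\tfrac{12}{\sqrt n}\int_{\varepsilon_{J+1}}^{S_n}$ with $\varepsilon_{J+1}$ possibly as small as $\tau/4$ --- the right way to finish is then the substitution $\tau\mapsto4\tau$ (legitimate since the statement has an infimum over $\tau$), not an ``absorption'' of $\int_{\tau/4}^{\tau}\sqrt{\log\mathcal N(\mathcal F,L_2(P_n),\varepsilon)}\,d\varepsilon$ into the $4\tau$ term, since that missing piece of integral need not be $O(\tau)$.
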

Note that the refined Dudley bound gives an upper bound on the empirical Rademacher process and depends on the metric entropy with respect to the empirical norm $L_2(P_n)$. The following simple 
lemma shows that the $L_2(P_n)$-norm can be replaced by the supremum-norm in the refined Dudley bound.

\begin{lem}\label{lem:empirical_metric_to_infinite}
Let $\mathcal{F}$ be any class of bounded functions defined on $\mathcal X$. For any sample $X_1, \dots, X_n$, let $\mathcal F_{|X_1,\ldots,X_n}$ be the subset of $\mathbb R^n$ defined by 
$$
\mathcal F_{|X_1,\ldots,X_n} = \big\{u\in\mathbb R^n: \exists f\in\mathcal{F}\text{ such that } u_i = f(X_i) \text{ for all }i=1,\ldots,n\big\}.
$$
For any $\varepsilon > 0$, we have
\begin{align}
  \mathcal{N}(\mathcal{F}, L_2(P_n), \varepsilon) \leq \mathcal{N}(\mathcal{F}_{|X_1, \dots, X_n}, \lVert \cdot \rVert_\infty, \varepsilon) \leq \mathcal{N}(\mathcal{F}, \lVert \cdot \rVert_\infty, \varepsilon).
\end{align}
\end{lem}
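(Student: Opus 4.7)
The proof is elementary and proceeds by exhibiting explicit covers from one space to the next, exploiting the two pointwise inequalities
\[
\lVert f \rVert_{L_2(P_n)} \;=\; \Big(\tfrac{1}{n}\sum_{i=1}^n f(X_i)^2\Big)^{1/2} \;\leq\; \max_{1\leq i\leq n} \lvert f(X_i)\rvert \;\leq\; \lVert f \rVert_\infty,
\]
which I will apply to differences $f-g$ throughout. The whole argument reduces to transporting covers along these two inequalities, so there is no real obstacle: the only care needed is to ensure that the cover points we construct lie in the correct set (as required by the definition of $\varepsilon$-cover given in the paper).

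\textbf{Right inequality.} The plan is to start with an optimal $\varepsilon$-cover of $\mathcal{F}$ in $\lVert \cdot \rVert_\infty$, say $\{f_1, \dots, f_M\}\subset \mathcal{F}$ with $M = \mathcal{N}(\mathcal{F}, \lVert \cdot \rVert_\infty, \varepsilon)$, and project it onto the sample: set $u^j := (f_j(X_1), \dots, f_j(X_n)) \in \mathcal{F}_{|X_1,\dots,X_n}$. For any $u \in \mathcal{F}_{|X_1,\dots,X_n}$, pick $f\in\mathcal{F}$ with $u_i = f(X_i)$, then choose $j$ with $\lVert f - f_j\rVert_\infty \leq \varepsilon$; applying the second pointwise inequality above to $f - f_j$ yields $\lVert u - u^j \rVert_\infty \leq \varepsilon$. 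Thus $\{u^1,\dots,u^M\}$ is an $\varepsilon$-cover of $\mathcal{F}_{|X_1,\dots,X_n}$ in $\ell_\infty(\mathbb{R}^n)$.

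\textbf{Left inequality.} Now start with an optimal $\varepsilon$-cover $\{u^1, \dots, u^N\}\subset \mathcal{F}_{|X_1,\dots,X_n}$ in $\ell_\infty$, with $N = \mathcal{N}(\mathcal{F}_{|X_1,\dots,X_n}, \lVert \cdot \rVert_\infty, \varepsilon)$. By definition of $\mathcal{F}_{|X_1,\dots,X_n}$, for each $j$ we may pick some $g_j \in \mathcal{F}$ realizing it, i.e.\ $g_j(X_i) = u^j_i$ for all $i$. For any $f \in \mathcal{F}$, the vector $(f(X_1),\dots,f(X_n))$ lies in $\mathcal{F}_{|X_1,\dots,X_n}$, so there exists $j$ with $\max_i \lvert f(X_i) - g_j(X_i)\rvert \leq \varepsilon$; applying the first pointwise inequality to $f - g_j$ gives $\lVert f - g_j \rVert_{L_2(P_n)} \leq \varepsilon$. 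Hence $\{g_1,\dots,g_N\} \subset \mathcal{F}$ is an $\varepsilon$-cover of $\mathcal{F}$ in the $L_2(P_n)$ metric, which proves the remaining inequality.

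Both implications give covers of at most the starting cardinality, which is exactly the two claimed bounds on covering numbers. The argument needs no additional structural hypothesis on $\mathcal{F}$ beyond boundedness (to ensure the $L_\infty$ covering number is well defined).
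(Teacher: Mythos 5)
Your proof is correct and follows essentially the same route as the paper's: for each inequality you transport an optimal cover along the pointwise chain $\lVert f\rVert_{L_2(P_n)} \leq \max_i \lvert f(X_i)\rvert \leq \lVert f\rVert_\infty$, lifting cover points of $\mathcal F_{|X_1,\dots,X_n}$ to representative functions in $\mathcal F$ exactly as the paper does. No gaps.
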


\begin{proof}
Let $\{u_1, \dots, u_M\}$ be a minimal $\varepsilon$-net for $\mathcal{F}_{|X_1, \dots, X_n}$ with respect to the supremum norm. Let $f_1,\ldots,f_M\in\mathcal F$ be such that $\big(f_j(X_1),\ldots,f_j(X_n)) = u_j$ for every $j=1,\ldots, M$. Then, for any $f \in \mathcal{F}$, there exists an index $j \in [M]$ such that $ \max_{i} \lvert f(X_i) - (u_j)_i \rvert 
= \max_{i} \lvert f(X_i) - f_j(X_i) \rvert\leq \varepsilon$. 
Since for any function $f$ in $\mathcal{F}$,
\begin{align}
  \lVert f - f_j \rVert_{L_2(P_n)}^2 = \frac{1}{n}\sum_{i=1}^n(f(X_i) - f_j(X_i))^2 \leq \lVert f - f_j \rVert_\infty^2,
\end{align}
$\{f_1, \dots, f_M\}$ is an $\varepsilon$-net for $\mathcal{F}$ with respect to the empirical $L_2$ norm. This proves the first inequality. Let now $f_1,\ldots,f_M$ be an $\varepsilon$-net of $(\mathcal{F}, \lVert \cdot \rVert_\infty)$. One readily checks that $u_1,\ldots,u_M$ defined by $u_j = (f_j(X_1),\ldots,f_j(X_n))$ is an $\varepsilon$-net of $\mathcal{F}_{|X_1, \dots, X_n}$. This completes the proof. 
\end{proof}

\section{Main result}\label{sec:mainthm}

We are now in a position to state the main theorem which gives, for an IPM defined by a Hölder class, the rate of convergence of the empirical measure towards its theoretical counterpart.

\begin{theorem}\label{mainthm}
Let $\mathcal{X} \subset \mathbb{R}^d$ be a convex bounded set with non-empty interior. Let $\mathcal{H}^\alpha(L)$ be the Hölder class of $\alpha$-smooth functions supported on the set $\mathcal{X}$  and with Hölder norm bounded by $L$. 
For any probability distribution $P$ supported on $\mathcal{X}$, denoting by $P_n$ the empirical measure associated to i.i.d. samples $X_1, \dots, X_n \sim P$, we have,
  \begin{equation}
  \label{thm:main_thm}
     \mathbb{E} \big[d_{\mathcal{H}^\alpha(L)}(P_n,P)\big]  =
     \mathbb{E} \bigg[\sup_{h \in \mathcal{H}^\alpha(L)} \big\lvert 
     \mathbb X_n(h) \big\rvert\bigg] \leq  cL \begin{cases}
       n^{-\nicefrac{\alpha}{d}} & \text{if $\alpha < \nicefrac{d}{2}$},\\
       n^{-\nicefrac{1}{2}}\ln(n) & \text{if $\alpha = \nicefrac{d}{2}$},\\
       n^{-\nicefrac{1}{2}} & \text{if $\alpha > \nicefrac{d}{2}$},
    \end{cases}
  \end{equation}
  where $c$ is a constant depending only on $d$, $\lambda_d(\mathcal{X}^1)$ and $\alpha$.
\end{theorem}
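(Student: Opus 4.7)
The plan is to chain together the four ingredients assembled in \Cref{sec:empirical_proc}: symmetrization, the refined Dudley bound, the passage from empirical $L_2$-covering to sup-norm covering, and the known metric entropy of H\"older balls. First I would apply \Cref{lem:sym} to reduce the quantity of interest to $2\,\mathbb E[\hat R_n(\mathcal H^\alpha(L))]$. Since $0\in\mathcal H^\alpha(L)$, one can invoke \Cref{refineddudley} conditionally on $X_1,\dots,X_n$, and by \Cref{lem:empirical_metric_to_infinite} the empirical $L_2$-covering numbers that appear there are dominated by the deterministic sup-norm covering numbers of $\mathcal H^\alpha(L)$. Moreover, the $|\bsk|=0$ term in the H\"older norm forces $\lVert f\rVert_\infty \leq L$ for every $f\in\mathcal H^\alpha(L)$, so $S_n(\mathcal H^\alpha(L)) \leq L$ almost surely and the chaining integral can be truncated at $L$.

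By the rescaling $\mathcal H^\alpha(L) = L\cdot\mathcal H^\alpha(1)$, \Cref{thm:entropic_numbers} gives $\log \mathcal N(\mathcal H^\alpha(L),\lVert\cdot\rVert_\infty,\varepsilon) \leq A\,(L/\varepsilon)^{d/\alpha}$ for $A = K_{\alpha,d}\lambda_d(\mathcal X^1)$. Setting $\gamma = d/\alpha$, the refined Dudley bound then reads
\begin{align*}
\mathbb E\bigl[\hat R_n(\mathcal H^\alpha(L))\bigr] \leq \inf_{\tau>0}\left\{4\tau + \frac{12\sqrt{A}\,L^{\gamma/2}}{\sqrt n}\int_\tau^L \varepsilon^{-\gamma/2}\,d\varepsilon\right\}.
\end{align*}
I would then split into three regimes according to the sign of $1-\gamma/2 = 1-d/(2\alpha)$. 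If $\alpha > d/2$ then $\gamma/2<1$ and the integrand is locally integrable at $0$, so sending $\tau\downarrow 0$ bounds the right-hand side by a constant multiple of $L/\sqrt n$. If $\alpha<d/2$ then $\gamma/2>1$ and the integral grows like $\tau^{1-\gamma/2}/(\gamma/2-1)$ as $\tau\to 0$; optimising $4\tau + cL^{\gamma/2} n^{-1/2}\tau^{1-\gamma/2}$ in $\tau$ yields the choice $\tau\asymp L\,n^{-1/\gamma} = L\,n^{-\alpha/d}$, at which both terms are of order $L\,n^{-\alpha/d}$. At the boundary $\alpha=d/2$ the integral equals $L\ln(L/\tau)$; taking $\tau = L/\sqrt n$ trades the two contributions and produces the rate $L\ln(n)/\sqrt n$.

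The main obstacle is not conceptual but bookkeeping: carrying the factor $L$ faithfully through the entropy rescaling, checking that in the smooth regime the upper endpoint $L$ of the chaining integral does not inflate the $1/\sqrt n$ rate by more than a linear factor in $L$, and verifying that the $\tau$-optimisation in the borderline case delivers the single logarithmic factor stated in \eqref{thm:main_thm} and no extra power of $\ln n$. Beyond that, the argument is a direct specialisation of the refined Dudley chaining bound to the known entropy of H\"older balls, with symmetrization as the bridge from the empirical process to the Rademacher process and \Cref{lem:empirical_metric_to_infinite} as the bridge from the random $L_2(P_n)$ metric back to the deterministic sup-norm.
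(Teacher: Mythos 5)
Your proposal is correct and follows essentially the same route as the paper: symmetrization, the refined Dudley bound combined with \Cref{lem:empirical_metric_to_infinite} and \Cref{thm:entropic_numbers}, followed by optimisation over $\tau$ in the three regimes determined by the sign of $1-d/(2\alpha)$. The only cosmetic difference is that the paper reduces to $L=1$ by homogeneity and packages the $\tau$-optimisation into \Cref{lem:improved_dudley_UB}, whereas you carry $L$ explicitly and optimise by hand.
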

We notice two different regimes: for highly smooth functions ($\alpha > d/2$), the rate of convergence does not depend on the smoothness $\alpha$ nor on the dimension $d$ and corresponds to the  usual parametric rate of convergence (note that it also matches the rate known for the Maximum Mean Discrepancy metric, which is an IPM indexed by the unit ball of a RKHS with bounded kernel \citep{briol2019statistical}). For less regular Hölder functions ($\alpha < d/2$), the rate of convergence depends both on the smoothness and on the dimension in a typical curse of dimensionality behavior. These two regimes coincide, up to a logarithmic factor, at their smoothness boundary $\alpha = d/2$: we have a continuous transition in terms of the exponent of the sample size. 
Interestingly the rates we obtain interpolate between the $n^{-1/d}$ rate known for Wasserstein-1 distance \citep{weed2019sharp} when considering $\mathcal{H}^1(1)$ and the $n^{-1/2}$ rate for Maximum Mean Discrepancy when considering Hölder classes with enough smoothness.
Those observations are summarised in \Cref{fig:rateofcv}. 

Finally, let us mention that the formulation of \Cref{mainthm} given above aims at characterizing the behaviour of the expected error in the asymptotic setting of large samples. This result follows from the following finite sample upper bound (proved in \Cref{sec:proof_thm4}):
\begin{equation}
\mathbb{E}\left[d_{\mathcal{H}^\alpha}(P_n, P) \right] \leq 12
    \begin{cases}
    \left(\frac{K\lambda}{n}\right)^{\nicefrac{\alpha}{d}} \left[ \frac{d}{d-2\alpha} \wedge (1+0.5\log(\frac{n}{9K\lambda})) \right] & \text{if $\alpha <  \nicefrac{d}{2}$},\\[10pt]
     \left(\frac{K\lambda}{n}\right)^{\nicefrac{1}{2}} \left[ \frac{2\alpha}{2\alpha-d} \wedge (1+\frac{\alpha}{d}\log(\frac{n}{9K\lambda})) \right] & \text{if $\alpha \geq  \nicefrac{d}{2}$},
    \end{cases}
\end{equation}
where $\lambda \coloneqq \lambda_d(\mathcal{X}^1)$ and $K = K_{\alpha,d}$ is the constant  depending only on $\alpha$ and $d$ borrowed from \Cref{thm:entropic_numbers}.



\begin{figure}
    \centering    \includegraphics[scale=0.23]{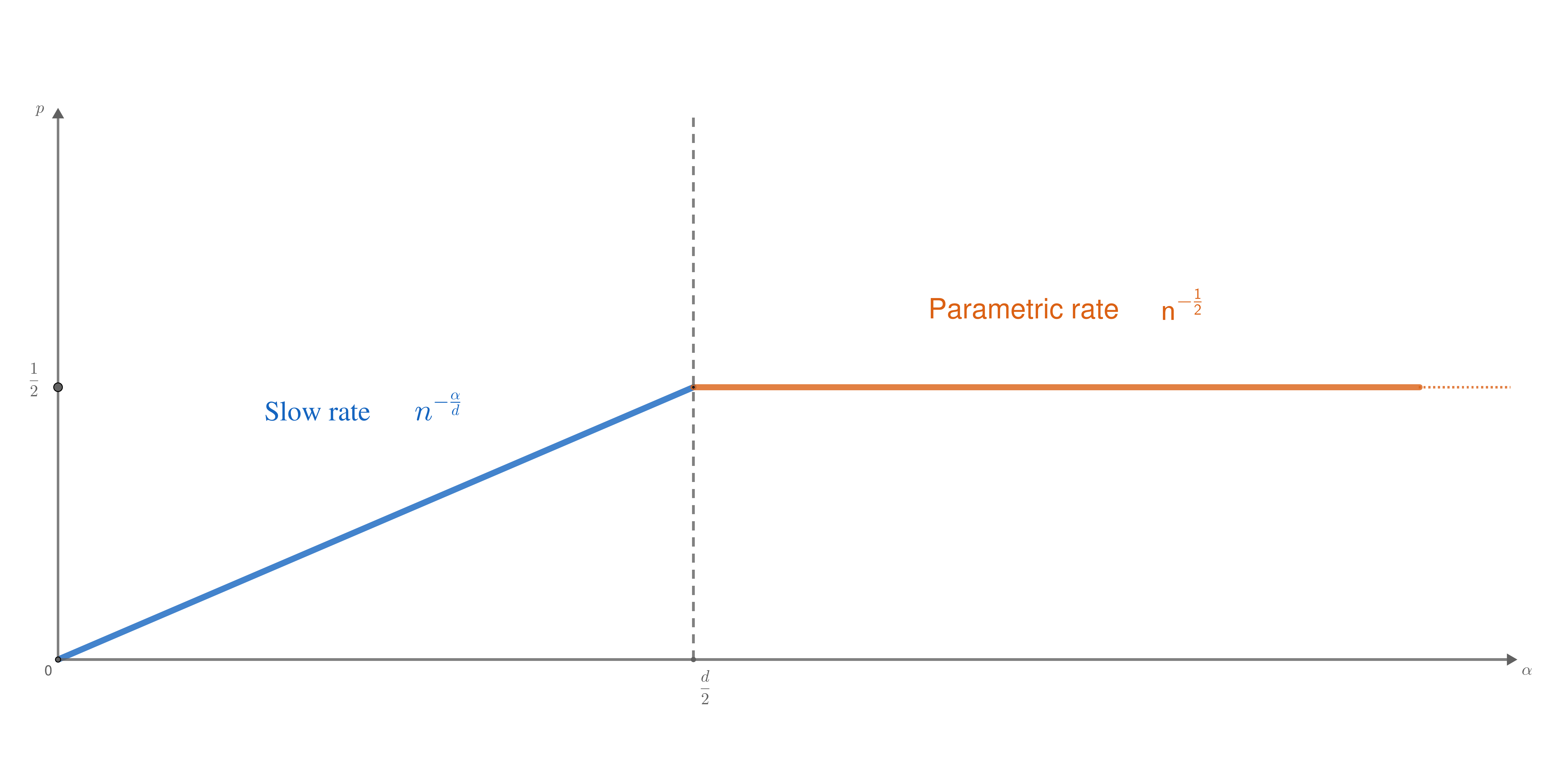}
    \caption{Exponent $p$ appearing in the rates of convergence $n^{-p}$ in \Cref{mainthm} as a function of the smoothness $\alpha$.}
    \label{fig:rateofcv}
\end{figure}

\section{Some extensions}
\label{sec:ext}

A slightly less precise but more general result can be obtained for any bounded class whose entropy grows polynomially in $1/\varepsilon$; see also \citet[Theorem 2]{rakhlin2017empirical}, where this condition naturally arises. Such an extension can be stated as follows. 
\begin{theorem}\label{extensionthm}
Let $\mathcal{X} \subset \mathbb{R}^d$ be a convex bounded set with non-empty interior. Let $\mathcal{H}$ be a bounded class of  functions supported on the set $\mathcal{X}$. Assume that the entropy of the class grows polynomially, \textit{i.e.}, there exist  positive real numbers $p$ and $A$ such that
\begin{align}
\forall\varepsilon>0, \quad \log \mathcal{N}(\mathcal{H}, \lVert \cdot \rVert_\infty, \varepsilon) \leq A \varepsilon^{-p}.
\end{align}
Then, for any probability distribution $P$ supported on $\mathcal{X}$, denoting by $P_n$ the empirical measure associated to i.i.d. samples $X_1, \dots, X_n \sim P$, we have,
  \begin{equation}
     \mathbb{E} \big[d_{\mathcal{H}}(P_n,P)\big]  =
     \mathbb{E} \bigg[\sup_{h \in \mathcal{H}} \big\lvert 
     \mathbb X_n(h) \big\rvert\bigg] \leq  c \begin{cases}
       n^{-\nicefrac{1}{p}} & \text{if $p >2$},\\
       n^{-\nicefrac{1}{2}}\ln(n) & \text{if $p = 2$},\\
       n^{-\nicefrac{1}{2}} & \text{if $p < 2$},
    \end{cases}
  \end{equation}
  where $c$ is a constant.
\end{theorem}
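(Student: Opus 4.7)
The plan is to mirror the proof of \Cref{mainthm}: combine symmetrization, the refined Dudley bound, and the passage from the empirical $L_2(P_n)$-entropy to the sup-norm entropy, but substitute the explicit Hölder entropy estimate of \Cref{thm:entropic_numbers} by the abstract polynomial bound $A\varepsilon^{-p}$ assumed here. The polynomial entropy structure is precisely what drives the three rate regimes, so the proof of \Cref{extensionthm} should essentially be a stripped-down version of the proof of \Cref{mainthm} whose intermediate inequalities are stated at the level of the exponent $p$ rather than $d/\alpha$.

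I would first apply the symmetrization lemma (\Cref{lem:sym}) to reduce the task to bounding $\mathbb{E}[\hat{R}_n(\mathcal{H})]$. Without loss of generality I may add the zero function to $\mathcal{H}$, which increases each covering number by at most one (absorbable into a slightly larger constant $A$) and is needed to apply \Cref{refineddudley}. Combining that theorem with \Cref{lem:empirical_metric_to_infinite} and the polynomial entropy assumption yields
\begin{align*}
\hat{R}_n(\mathcal{H}) \leq \inf_{\tau > 0}\left\{ 4\tau + \frac{12\sqrt{A}}{\sqrt{n}}\int_{\tau}^{S_n(\mathcal{H})} \varepsilon^{-p/2}\,d\varepsilon \right\}.
\end{align*}
Since $\mathcal{H}$ is uniformly bounded in sup-norm by some constant $M$, one has $S_n(\mathcal{H})\leq M$ almost surely, and the upper limit of integration may be replaced by $M$.

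The remainder is a deterministic scalar optimization of $\Phi(\tau) = 4\tau + \tfrac{12\sqrt{A}}{\sqrt{n}}\int_{\tau}^{M}\varepsilon^{-p/2}\,d\varepsilon$, split according to whether the integrand is integrable at the origin. When $p<2$, the integrand is integrable near zero, and letting $\tau\downarrow 0$ directly yields the parametric rate $n^{-1/2}$ with constant $\tfrac{12\sqrt{A}\,M^{1-p/2}}{1-p/2}$. When $p=2$, the antiderivative is logarithmic and one obtains a bound of the form $4\tau + \tfrac{C}{\sqrt{n}}\ln(M/\tau)$; the choice $\tau = n^{-1/2}$ produces the claimed rate $n^{-1/2}\ln n$. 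When $p>2$, the integral is of order $\tau^{1-p/2}$ up to a lower-order contribution from the upper limit; balancing $4\tau$ against $\tfrac{\mathrm{const}}{\sqrt{n}}\tau^{1-p/2}$ gives the optimizer $\tau \asymp n^{-1/p}$ and the rate $n^{-1/p}$.

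The main (modest) obstacle is bookkeeping: tracking the constants across the three regimes and confirming they all collapse into the single constant $c$ of the statement, together with the routine verification that enlarging $\mathcal{H}$ to contain zero does not alter the rates. The heavier machinery—symmetrization, the refined Dudley bound, and the sup-norm reduction—is already in place and requires no new ingredient, which is exactly why this generalization beyond Hölder classes is essentially automatic.
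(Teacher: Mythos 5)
Your proposal is correct and follows essentially the same route as the paper, which simply states that the extension is proved exactly as \Cref{mainthm}: symmetrization, the refined Dudley bound of \Cref{refineddudley} combined with \Cref{lem:empirical_metric_to_infinite}, the polynomial entropy assumption in place of \Cref{thm:entropic_numbers}, and the final scalar optimization over $\tau$ (which the paper packages as \Cref{lem:improved_dudley_UB} with $\beta = p/2$, while you carry out the same three-case computation by hand). Your explicit handling of adding the zero function and of bounding $S_n(\mathcal{H})$ by the uniform bound $M$ is a welcome, if routine, clarification of details the paper leaves implicit.
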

The proof of the extension is exactly the same as the proof of \Cref{mainthm} up to constants.
In this note we have seen Hölder classes as examples of classes with polynomial growth of the entropy but there are many other such classes. To illustrate this we give the example of Sobolev classes which, in some cases, are more general than Hölder classes. For a positive integer $s$ and a real number $1 \leq p \leq +\infty$, define the Sobolev space $\mathcal{W}_p^s(r)$ with radius $r > 0$ as
\begin{align}
    \mathcal{W}^s_p(r) \coloneqq \left\{ f \in C^s(\mathcal{X}, \mathbb{R}) : \sum_{\lvert k \rvert \leq s} \lVert D^k f \rVert_p \leq r \right\}.
\end{align}
Note that for any positive integer $s$ and for any positive radius $L$, there exist radii $r$ and $r'$ such that
\begin{align}
    \mathcal{W}^{s}_\infty(r) \subset \mathcal{H}^s(L) \subset  \mathcal{W}^{s-1}_\infty(r').
\end{align}
A consequence of \cite[Corollary 1]{nickl2007bracketing} is that for any positive integer $s>0$, and real number $p$ such that $d/s<p \leq +\infty$, the entropy of a Sobolev class grows polynomially as
\begin{align}
    \log \mathcal{N}(\mathcal{W}^s_p(L), \lVert \cdot \rVert_\infty, \varepsilon) \leq A \varepsilon^{-d/s},
\end{align}
for some positive constant $A$. Thus \Cref{extensionthm} holds for this class. Finally we point out that such bounds on the entropy hold for more general spaces such as some Besov spaces. We refer the reader to \citet{nickl2007bracketing} for more details.

\section*{Acknowledgements}
The author thanks Arnak Dalalyan for his diligent proofreading of this note, Yannick Guyonvarch for interesting references and Alexander Tsybakov for suggesting to present an extension of the main result.


\bibliographystyle{apalike}
\bibliography{bibliography.bib}

\newpage

\section{Appendix: proofs}

This section contains the proofs of the main results, \Cref{refineddudley,mainthm}, stated in the main body of the note.

\subsection{Proof of \texorpdfstring{\Cref{refineddudley}}{Theorem 3}}

The proof of \Cref{refineddudley} can be found in \cite{srebro2010note}. We add it here for completeness.

Let $\gamma_0 = S_n(\mathcal F) = 
\sup_{f \in \mathcal{F}} \lVert f \rVert_{L_2(P_n)}$. Define $\gamma_j = 2^{-j}\gamma_0$, for every integer $j \in \mathbb{N}$, and let $T_j$ be a minimal $\gamma_j$-cover of $\mathcal{F}$ with respect to $L_2(P_n)$. For any function $f \in \mathcal{F}$, we denote by $\hat{f}_j$ an element of $T_j$ which is an $\gamma_j$ approximation of $f$.
For any positive integer $N$ we can decompose the function $f$ as
\begin{align}
    f = f - \hat{f}_N + \sum_{j=1}^N (\hat{f}_j - \hat{f}_{j-1})
\end{align}
where $\hat{f}_0 = 0 \in \mathcal F$.
Hence, for any positive integer $N$, we have
\begin{align}
    \hat{R}_n(\mathcal{F}) &= \frac{1}{n} \mathbb{E}_\sigma \left[ \sup_{f \in \mathcal{F}} \sum_{i=1}^n \sigma_i \left(f(X_i) - \hat{f}_N(X_i) + \sum_{j=1}^N (\hat{f}_j(X_i) - \hat{f}_{j-1}(X_i))\right) \right]\\
    &\leq \frac{1}{n} \mathbb{E}_\sigma \left[ \sup_{f \in \mathcal{F}} \sum_{i=1}^n \sigma_i (f(X_i) - \hat{f}_N(X_i)) \right] + \sum_{j=1}^N \frac{1}{n} \mathbb{E}_\sigma\left[ \sup_{f \in \mathcal{F}} \sum_{i=1}^n \sigma_i (\hat{f}_j(X_i) - \hat{f}_{j-1}(X_i)) \right]\\
    &\leq  \frac{1}{n} \sup_{f \in \mathcal{F}} \sum_{i=1}^n | (f(X_i) - \hat{f}_N(X_i))| +  \sum_{j=1}^N \frac{1}{n} \mathbb{E}_\sigma\left[ \sup_{f \in \mathcal{F}} \sum_{i=1}^n \sigma_i (\hat{f}_j(X_i) - \hat{f}_{j-1}(X_i)) \right]\\
    &= \sup_{f \in \mathcal{F}} \lVert f - \hat{f}_N \rVert_{L_2(P_n)} +  \sum_{j=1}^N \frac{1}{n} \mathbb{E}_\sigma\left[ \sup_{f \in \mathcal{F}} \sum_{i=1}^n \sigma_i (\hat{f}_j(X_i) - \hat{f}_{j-1}(X_i)) \right]\\
    &\leq \gamma_N +   \sum_{j=1}^N \frac{1}{n} \mathbb{E}_\sigma\left[ \sup_{f \in \mathcal{F}} \sum_{i=1}^n \sigma_i (\hat{f}_j(X_i) - \hat{f}_{j-1}(X_i)) \right].
\end{align}
For any positive integer $j$, the triangle inequality gives
\begin{align}\label{eq:norm}
    \lVert \hat{f}_j - \hat{f}_{j-1} \rVert_{L_2(P_n)} \leq  \lVert \hat{f}_j - f \rVert_{L_2(P_n)} + \lVert f - \hat{f}_{j-1} \rVert_{L_2(P_n)}  \leq \gamma_j + \gamma_{j-1} = 3\gamma_j.
\end{align}
We need the following classic lemma which controls the expectation of a Rademacher average over a finite set\footnote{We refer the reader to \url{https://ttic.uchicago.edu/~tewari/lectures/lecture10.pdf} for a simple proof of this lemma.}.
\begin{lem}[Massart's finite class lemma]
Let $\mathcal{X}$ be a finite subset of $\mathbb{R}^n$ and let $\sigma_1, \dots, \sigma_n$ be independent Rademacher random variables. Denote the radius of $\mathcal{X}$ by $R = \sup_{x \in \mathcal{X}} \lVert x \rVert$. Then, we have,
\begin{align}
    \mathbb{E}\left[\sup_{x \in \mathcal{X}} \frac{1}{n}\sum_{i=1}^n \sigma_i x_i \right] \leq R \frac{\sqrt{2\log \lvert \mathcal{X}\lvert}}{n}.
\end{align}
\end{lem}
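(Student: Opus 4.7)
The plan is to prove Massart's finite class lemma by the standard exponential moment (Chernoff-style) argument applied to the supremum of sub-Gaussian variables. For each $x\in\mathcal{X}$, let $Z_x = \sum_{i=1}^n \sigma_i x_i$. Since the $\sigma_i$ are independent Rademacher variables (hence $1$-sub-Gaussian) and independent of each other, $Z_x$ is a linear combination of independent sub-Gaussians, so Hoeffding's lemma gives the moment generating function bound $\mathbb{E}[\exp(\lambda Z_x)] \leq \exp(\lambda^2 \lVert x \rVert^2/2) \leq \exp(\lambda^2 R^2/2)$ for every $\lambda \in \mathbb{R}$.

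Next, for any $\lambda>0$, I would apply Jensen's inequality to the convex function $u\mapsto e^{\lambda u}$, then replace the supremum over a finite set by a sum, yielding
\begin{align}
\exp\!\Big(\lambda\,\mathbb{E}\big[\sup_{x\in\mathcal{X}} Z_x\big]\Big)
\leq \mathbb{E}\big[\sup_{x\in\mathcal{X}} e^{\lambda Z_x}\big]
\leq \sum_{x\in\mathcal{X}} \mathbb{E}\big[e^{\lambda Z_x}\big]
\leq \lvert \mathcal{X}\rvert\, e^{\lambda^2 R^2/2}.
\end{align}
Taking logarithms and dividing by $\lambda$ gives
\begin{align}
\mathbb{E}\big[\sup_{x\in\mathcal{X}} Z_x\big]
\leq \frac{\log \lvert\mathcal{X}\rvert}{\lambda} + \frac{\lambda R^2}{2}.
\end{align}

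Finally I would optimize over $\lambda>0$ by choosing $\lambda = \sqrt{2\log\lvert\mathcal{X}\rvert}/R$ (assuming $R>0$ and $\lvert\mathcal{X}\rvert\geq 2$; the degenerate cases are immediate), which balances the two terms and produces the bound $\mathbb{E}[\sup_x Z_x] \leq R\sqrt{2\log\lvert\mathcal{X}\rvert}$. Dividing by $n$ yields the claim. There is no real obstacle here: the proof is a textbook application of the union-bound-in-expectation technique, and the only mildly delicate point is handling the absence of absolute values in the statement, which is actually why the $1/n$ factor (rather than $2/n$) and the single-sided Hoeffding MGF suffice without an extra factor of two.
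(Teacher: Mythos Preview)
Your proof is correct and is the standard textbook argument. The paper does not actually supply its own proof of this lemma; it merely cites external lecture notes in a footnote, and the argument there is exactly the Chernoff/Jensen/union-bound computation you wrote out, so there is nothing to contrast.
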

Applying this lemma to $\mathcal{X}_j = \left\{ (\hat{f}_j(X_i) - \hat{f}_{j-1}(X_i))_{i=1}^n \in \mathbb{R}^n : f \in \mathcal{F} \right\}$ for any $j = 1, \dots, n$ and using \eqref{eq:norm}, we get
\begin{align}
    \sum_{j=1}^N \frac{1}{n} \mathbb{E}_\sigma\left[ \sup_{f \in \mathcal{F}} \sum_{i=1}^n \sigma_i (\hat{f}_j(X_i) - \hat{f}_{j-1}(X_i)) \right] \leq \sum_{j=1}^N 3 \gamma_j \frac{\sqrt{2 \log (\lvert T_j \rvert \cdot \rvert T_{j-1}\lvert)}}{n}
\end{align}
Therefore we have
\begin{align}
    \hat{R}_n(\mathcal{F}) &\leq \gamma_N + \sum_{j=1}^N 3 \gamma_j \frac{\sqrt{2 \log (\lvert T_j \rvert \cdot \rvert T_{j-1}\lvert)}}{n}\\
    &\leq  \gamma_N + \frac{6}{n} \sum_{j=1}^N  \gamma_j \sqrt{\log \lvert T_j \rvert}\\
    &= \gamma_N + \frac{12}{n} \sum_{j=1}^N (\gamma_{j} - \gamma_{j+1})\sqrt{\log \lvert T_j \rvert}\\
    &= \gamma_N + \frac{12}{n} \sum_{j=1}^N (\gamma_{j} - \gamma_{j+1})\sqrt{\log \mathcal{N}(\mathcal{F}, L_2(P_n), \gamma_j)}\\
    &\leq \gamma_N + \frac{12}{n} \int_{\gamma_{N+1}}^{\gamma_0}  \sqrt{\log \mathcal{N}(\mathcal{F}, L_2(P_n), \varepsilon)} d\varepsilon.
\end{align}
For any $\tau > 0$, pick $N = \sup\{j : \gamma_j > 2 \tau\}$. Then $\gamma_N = 2 \gamma_{N+1} \leq 4 \tau$ and $\gamma_{N+1} = \gamma_N/2 \geq  \tau$.
Hence, we conclude that
\begin{align}
    \hat{R}_n(\mathcal{F}) \leq 4 \tau + \frac{12}{\sqrt{n}} \int_\tau^{\gamma_0} \sqrt{\log \mathcal{N}(\mathcal{F}, L_2(P_n), \varepsilon)}\, d\varepsilon.
\end{align}
Since $\tau$ can take any positive value we can take the infimum over all positive $\tau$ and this concludes the proof.

\subsection{Proof of \texorpdfstring{\Cref{mainthm}}{Theorem 4}}
\label{sec:proof_thm4}
Without loss of generality, we prove the theorem in the case $L=1$. The general case will follow by homogeneity. For simplicity we write $\mathcal{H}^\alpha = \mathcal{H}^\alpha(1)$, $Ph = \int_{\mathcal X} h\,dP$ and $P_nh = \int_{\mathcal X} h\,dP_n$.
A symmetrization argument (\Cref{lem:sym}) gives
\begin{align}
    \mathbb{E} \bigg[\sup_{h \in \mathcal{H}^\alpha} \lvert Ph - P_nh \rvert\bigg] \leq 2 \mathbb{E}\big[ \hat{R}_n(\mathcal{H}^\alpha)\big],
\end{align}
where the empirical Rademacher process $\hat{R}_n(\mathcal{H}^\alpha)$ is given by
\begin{align}
    \hat{R}_n (\mathcal{H}^\alpha) = \frac{1}{n} \mathbb{E}\left[ \sup_{h \in \mathcal{H}^\alpha} \sum_{i=1}^n \sigma_i h(X_i) \bigg|X_1,\ldots,X_n\right].
\end{align}
Noting that, for any $h \in \mathcal{H}^\alpha$,
\begin{align}
  P_nh^2 \coloneqq \frac1n 
  \sum_{i=1}^n h^2(X_i) \leq \lVert h^2 \rVert_\infty \leq 1,
\end{align}
the improved Dudley bound (\Cref{refineddudley}) coupled with \Cref{lem:empirical_metric_to_infinite} yields,
\begin{align}
  \mathbb{E} \bigg[\sup_{h \in \mathcal{H}^\alpha} \lvert P_n h - Ph \rvert\bigg] &\leq \inf_{\tau > 0} \left( 4\tau + \frac{12}{\sqrt{n}} \int_{\tau}^1 \sqrt{\log \mathcal{N}(\mathcal{H}^\alpha, \lVert \cdot \rVert_\infty, \varepsilon)}d\varepsilon \right)\\
  &\leq \inf_{\tau > 0} \left( 4\tau + \frac{12 \sqrt{K \lambda_d(\mathcal{X}^1)}}{\sqrt{n}} \int_\tau^1 \varepsilon^{-d/2\alpha}d\varepsilon \right)\\
\end{align}
Applying \Cref{lem:improved_dudley_UB} with $\beta = \frac{d}{2\alpha}$ and $a=3 \sqrt{\frac{K\lambda}{n}}$ where $K = K_{\alpha,d}$ is the constant  depending only on $\alpha$ and $d$ borrowed from \Cref{thm:entropic_numbers} and $\lambda \coloneqq \lambda_d(\mathcal{X}^1)$, we get
\begin{equation}
\label{test}
 \mathbb{E} \bigg[\sup_{h \in \mathcal{H}^\alpha} \lvert P_n h - Ph \rvert\bigg] \leq 12
    \begin{cases}
    \left(\frac{K\lambda}{n}\right)^{\nicefrac{\alpha}{d}} \left[ \frac{d}{d-2\alpha} \wedge (1+0.5\log(\frac{n}{9K\lambda})) \right] & \text{if $\alpha <  \nicefrac{d}{2}$},\\[10pt]
     \left(\frac{K\lambda}{n}\right)^{\nicefrac{1}{2}} \left[ \frac{2\alpha}{2\alpha-d} \wedge (1+\frac{\alpha}{d}\log(\frac{n}{9K\lambda})) \right] & \text{if $\alpha \geq \nicefrac{d}{2}$}.
    \end{cases}
\end{equation}
The proof is finished since the upper bound stated in \Cref{mainthm} is a direct consequence of \eqref{test}




\subsection{Additional lemma}

The following lemma enables to obtain an upper bound on Dudley's refined bound (\Cref{refineddudley}) for any bounded class whose entropy grows polynomially in $1/\varepsilon$. 

\begin{lem}
\label{lem:improved_dudley_UB}
For any real positive numbers $a$ and $\beta$, it holds
\begin{align}
    \min_{0 \leq \tau \leq 1} \left( \tau + a \int_\tau^1 \varepsilon^{-\beta}d\varepsilon \right) \leq (a^{\nicefrac{1}{\beta}} \vee a)\left[\left(\frac{\beta \vee 1 }{\lvert \beta-1 \rvert }\right)) \wedge \left(1+ \frac{\log(\nicefrac{1}{a})}{\beta \vee 1}\right)\right].
\end{align}
\end{lem}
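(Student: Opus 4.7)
The plan is to evaluate $F(\tau) \coloneqq \tau + a \int_\tau^1 \varepsilon^{-\beta}\, d\varepsilon$ in closed form and, since the right-hand side of the lemma is a product of $(a^{1/\beta}\vee a)$ with a minimum of two quantities, to exhibit for each of the two bounds inside the $\wedge$ an explicit witness $\tau \in [0,1]$ realising it. The integral itself is elementary: $F(\tau) = \tau + a(1 - \tau^{1-\beta})/(1-\beta)$ if $\beta \neq 1$ and $F(\tau) = \tau + a\log(1/\tau)$ if $\beta = 1$.

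For the polynomial bound $(\beta\vee 1)/|\beta-1|$ I would split into three sub-cases. When $\beta > 1$ set $\tau_\star = a^{1/\beta}$ and crudely bound $\tau_\star^{1-\beta} - 1 \leq \tau_\star^{1-\beta}$ in the integral, obtaining $F(\tau_\star) \leq a^{1/\beta} + a^{1/\beta}/(\beta-1) = a^{1/\beta}\beta/(\beta-1)$. When $\beta < 1$ take $\tau = 0$, giving $F(0) = a/(1-\beta)$ directly. When $\beta = 1$ the factor is $+\infty$ by the paper's convention $1/0 = +\infty$, so there is nothing to prove.

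For the logarithmic bound $1 + \log(1/a)/(\beta\vee 1)$ the key tool is the elementary inequality $1 - e^{-x} \leq x$ for $x\geq 0$. In both cases I would pick $\tau_\star = a^{1/(\beta\vee 1)}$. For $\beta \geq 1$ this gives $\tau_\star = a^{1/\beta}$, and a direct computation yields the identity
\begin{align}
a \int_{\tau_\star}^1 \varepsilon^{-\beta}\, d\varepsilon = \frac{a^{1/\beta} - a}{\beta - 1} = \frac{a^{1/\beta}(1 - e^{-x})}{\beta-1}, \qquad x = \frac{\beta-1}{\beta}\log(1/a) \geq 0,
\end{align}
so the inequality bounds this by $a^{1/\beta}\log(1/a)/\beta$. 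For $\beta \leq 1$ take $\tau_\star = a$ and similarly rewrite $a\int_a^1 \varepsilon^{-\beta}\, d\varepsilon = a(1 - e^{-x})/(1-\beta)$ with $x = (1-\beta)\log(1/a)$, yielding $a\log(1/a)$. Adding $\tau_\star$ in each case produces the claimed factor $(a^{1/\beta}\vee a)(1 + \log(1/a)/(\beta\vee 1))$.

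The main obstacle is the logarithmic bound when $\beta \neq 1$: one has to recognise the $1 - e^{-x}$ shape hidden inside $(a^{1/\beta} - a)/(\beta-1)$ and pair the sign of $\beta - 1$ with the right choice of $x$ so that $x$ stays non-negative. Once the exponential inequality is applied, the remaining steps reduce to unwinding definitions and taking the minimum of the two witnesses' values.
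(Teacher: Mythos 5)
Your proof is correct and follows essentially the same route as the paper's: both evaluate the objective at (or near) the stationary point $\tau=a^{1/\beta}$ and control the resulting term $(a^{1/\beta}-a)/(\beta-1)$ via the elementary inequality $1-e^{-x}\le x$, which is exactly the paper's inequality $1-x^{\alpha}\le\log(x^{-\alpha})$ in different clothing. The only loose end is the case $a>1$, where your witnesses $a^{1/\beta}$ and $a^{1/(\beta\vee 1)}$ fall outside $[0,1]$, so $F(\tau_\star)$ no longer dominates the constrained minimum; the paper dispatches this case by noting that the minimum equals $1$ there (and, strictly speaking, the stated right-hand side can even become negative for very large $a$, so the lemma is really only meaningful for $a\le 1$, which is the regime in which it is applied).
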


\begin{proof}
Let $a$ and $\beta$ be real positive numbers. Define the function 
\begin{align}
f\colon [0, 1] &\to \mathbb{R}\\
\tau & \mapsto \tau + a \int_\tau^1 \varepsilon^{-\beta}d\varepsilon.
\end{align}
One can easily check that
\begin{align}
    f^* \coloneqq \min_{0 \leq \tau \leq 1} f(\tau) = \begin{cases}
    1 &\text{if $a > 1$},\\
    a^{\nicefrac{1}{\beta}} + \frac{a}{1-\beta}(1-a^{\nicefrac{1}{\beta}-1}) &\text{if $a < 1$ }.
    \end{cases}
\end{align}

In the case $a < 1$, using the fact that $1-x^{\alpha} \leq \log(x^{-\alpha})$ for any $\alpha >0$ and $x \in (0, 1]$, we have
\begin{align}
\label{eq:case_a_geq_1}
    f^* \leq (a^{\nicefrac{1}{\beta}} \vee a)\left[\left(\frac{\beta \vee 1 }{\lvert \beta-1 \rvert }\right)) \wedge \left(1+ \frac{\log(\nicefrac{1}{a})}{\beta \vee 1}\right)\right].
\end{align}
Finally, since the RHS of \eqref{eq:case_a_geq_1} is greater than $1$ for any $a>1$, \eqref{eq:case_a_geq_1} holds for any positive real $a$ and this concludes the proof.
\end{proof}

\end{document}